\author{Aditya Joshi\\
School of Information and Physical Sciences\\
University of Newcastle\\
Callaghan, NSW 2308\\
Australia\\
aditya.joshi8342@gmail.com\\ 
 }
\date{\today}
\title{On the induced neighbourhood of vertex-transitive graphs}
\newtheorem{theorem}{Theorem}[section]
\newtheorem{corollary}{Corollary}[theorem]
\newtheorem{lemma}[theorem]{Lemma}
\newtheorem{prop}[theorem]{Proposition}
\newtheorem{definition}[theorem]{Definition}
\begin{document}
\maketitle

\begin{abstract}
We present new conditions that eliminate a large class of graphs from being induced neighbourhoods of finite vertex-transitive graphs.
\end{abstract}

\section{Introduction}

In this paper graphs have neither loops nor multiple edges. Edges between two adjacent vertices $u$ and $v$ will be denoted by $[u,v]$. The number of edges incident with any vertex $v$ will be called the valency of $v$. A vertex with complete valency is one that is adjacent to all the remaining vertices of the graph.

\begin{definition}
\normalfont Let $X$ denote a graph and let $v$ denote some vertex of $X$. The \textit{induced neighbourhood} of $v$ is the subgraph induced on the set of vertices adjacent to $v$. We denote the set of vertices in the neighbourhood of $v$ by $N(v,X)$ and the induced neighbourhood of $v$ by $\langle N(v,X) \rangle$.
\end{definition}

\textit{Uniform neighbourhood graphs} are graphs for which all the induced neighbourhoods are isomorphic. In the literature these have been called constant-link graphs, constant-neighbourhood graphs, locally $X$-graphs and locally homogeneous graphs.  
Much of what has been studied of such graphs has been motivated by two problems posed by Zykov \cite[p.164]{Zykov}. The first of these problems asks to classify the graphs which are the induced neighbourhoods of some uniform neighbourhood graph. The second problem asks to classify the graphs which are induced neighbourhoods only for infinite uniform neighbourhood graphs. The two problems have turned out to be quite difficult. However, there are partial results on Zykov's problems that are largely devoted to studying specific families of graphs which may be induced neighbourhoods of some uniform neighbourhood graphs, determining necessary conditions such graphs must satisfy and examining sufficient conditions. See \cite{Survey, small_degree} and \cite[p.8]{partial_results_ref} for more information about the many partial results in the literature. \newline

It is obvious that vertex-transitive graphs are uniform neighbourhood graphs. One problem suggested in \cite{which_trees} is to find the smallest order for a connected uniform neighbourhood graph that is not vertex-transitive. This article manages to show that the order is bounded above by 16 by direct construction. This problem is completely resolved in  \cite{leastnon_vt} where it is shown that the minimum order is 10. 
To the author's knowledge the only work that has been done on classifying the induced neighbourhoods of vertex-transitive graphs is the paper \cite{t_realizations}.

The general methods used for showing that some graph is not the induced neighbourhood of a finite vertex-transitive graph include those used for the induced neighbourhood of uniform neighbourhood graphs. Another method is by contradicting the vertex or edge connectivity property of finite vertex-transitive graphs. We give an example of the latter method. It is well known that the edge-connectivity of a connected vertex-transitive graph is equal to the valency $k$ of the graph, and its vertex-connectivity is at least $\frac{2}{3}(k+1)$. Let $X$ denote a graph with $n$ vertices and $m$ edges. Suppose also that $X$ is not a clique. Using the connectivity results it is easy to show that if $m > \frac{n(n-2)}{2}$ or if the number of complete valency vertices in $X$ is greater than $\frac{1}{3}(n-2)$, then $X$ is not the induced neighbourhood of a finite vertex-transitive graph.

In this paper we give results that help to eliminate several other graphs as being induced neighbourhoods of finite vertex-transitive graph. We do this by showing that some substructure of this class of graphs violates an imposed condition on their automorphism groups. We also give examples of several graphs that satisfy this theorem.  

We now present some definitions that will be used in the subsequent sections.  

\begin{definition}[Regular group] 
\normalfont Let $G$ denote a permutation group acting transitively on a set $\Omega$. Then $G$ is called \textit{regular} if the stabiliser subgroup for each element is trivial. 
\end{definition}

\begin{definition}[Semiregular permutation]
\normalfont A permutation is \textit{semiregular} if the length of the cycles are the same in the cyclic decomposition of the permutation.
\end{definition}

We now give some equivalent definitions for a regular group. 

\begin{theorem}\label{1.4}
The following four conditions are equivalent for a transitive permutation group $G$ acting on a finite set $\Omega$.
\begin{enumerate}
\item The stabiliser of each element is the identity;
\item $|G| = |\Omega|$; 
\item For any $x,y \in \Omega$, there is a unique $f\in G$ such that $f(x)=y$; and 
\item All permutations in $G$ are semiregular. 
\end{enumerate}
\end{theorem}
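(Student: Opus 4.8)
The plan is to prove the equivalence of these four conditions for a transitive permutation group on a finite set. Since there are four conditions, I would establish a cycle of implications, say $(1) \Rightarrow (2) \Rightarrow (3) \Rightarrow (4) \Rightarrow (1)$, which is the most economical route. The central tool throughout is the orbit-stabiliser theorem, which for a transitive action gives $|G| = |\Omega| \cdot |G_x|$ where $G_x$ denotes the stabiliser of $x \in \Omega$.

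For $(1) \Rightarrow (2)$, I would simply invoke orbit-stabiliser: transitivity means the single orbit has size $|\Omega|$, so $|G| = |\Omega|\cdot|G_x|$, and condition $(1)$ forces $|G_x| = 1$, giving $|G| = |\Omega|$. For $(2) \Rightarrow (3)$, I would first note that transitivity guarantees the \emph{existence} of some $f$ with $f(x) = y$ for any $x,y$. For uniqueness, the key observation is that condition $(2)$ together with orbit-stabiliser forces $|G_x| = 1$ for every $x$; then if $f(x) = g(x) = y$, the element $g^{-1}f$ fixes $x$, hence lies in the trivial stabiliser $G_x$, so $f = g$.

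For $(3) \Rightarrow (4)$, I would argue that uniqueness of the transporting element implies each stabiliser is trivial (taking $y = x$ shows only the identity fixes $x$). To conclude semiregularity, I would take an arbitrary $f \in G$ and examine its cycle structure: a cycle of length $\ell$ through a point $x$ means $f^{\ell}$ fixes $x$, so $f^{\ell} \in G_x = \{e\}$, whence $f^{\ell} = e$. Thus the order of $f$ divides every cycle length; combined with the fact that the order of $f$ equals the least common multiple of its cycle lengths, this forces all cycle lengths to be equal to $\mathrm{ord}(f)$, which is precisely semiregularity. Finally, $(4) \Rightarrow (1)$ follows because if some non-identity $f$ fixed a point $x$, that would be a cycle of length $1$; semiregularity would then force \emph{all} cycles of $f$ to have length $1$, making $f$ the identity, a contradiction.

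I expect the main obstacle to be the $(3) \Rightarrow (4)$ step, specifically the clean argument that a permutation all of whose point-stabilisers (within the group) are trivial must have equal cycle lengths. The subtlety is that semiregularity is a statement about a \emph{single} permutation's cycle structure, whereas the hypotheses are about the group action, so I must carefully translate the group-theoretic condition ``$f^{\ell}$ fixes a point implies $f^{\ell} = e$'' into the combinatorial conclusion about cycles. The other implications are essentially direct applications of orbit-stabiliser and the definition of a stabiliser, so they should be routine.
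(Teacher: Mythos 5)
Your proposed proof is correct. Note that the paper itself states Theorem \ref{1.4} without proof, treating it as a standard fact about regular permutation groups, so there is no in-paper argument to compare against; your cycle of implications $(1)\Rightarrow(2)\Rightarrow(3)\Rightarrow(4)\Rightarrow(1)$ via the orbit-stabiliser theorem is the standard route, and your handling of the $(3)\Rightarrow(4)$ step (each cycle length both divides and is divided by $\mathrm{ord}(f)$, hence equals it) is exactly the right way to pass from trivial stabilisers to equal cycle lengths.
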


\section{Results}
In this section we present some results which help to eliminate many graphs from being the induced neighbourhood of a finite vertex-transitive graph. The general approach taken is to restrict semiregular automorphisms on subsets of the induced neighbourhood of a finite vertex-transitive graph, by imposing conditions on the induced neighbourhood graph's structure that force it to do so. We then impose further conditions on these subsets such that their induced subgraph contradicts having such a semiregular automorphism. 
An {\it asymmetric graph} is a graph whose only automorphism is the identity permutation. 

The following lemma is needed and the proof of it is included for completeness.
\begin{lemma}\label{2.1}
Let $X$ denote an asymmetric graph and suppose that it is the induced neighbourhood of some finite vertex-transitive graph $Y$. Then $Y$ is a Cayley graph and $Aut(Y)$ is regular.
\end{lemma}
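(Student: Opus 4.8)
The plan is to prove the stronger assertion that $G := Aut(Y)$ is regular and then to recover the Cayley property as a formal consequence. Since $Y$ is vertex-transitive, $G$ acts transitively on $V(Y)$, so by Theorem~\ref{1.4} it suffices to verify that the vertex stabiliser $G_v$ is trivial for one vertex $v$; as all stabilisers are conjugate under the transitive action, the choice of $v$ is immaterial.

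Fix $v$ and take any $g \in G_v$. Since $g$ fixes $v$, it permutes $N(v,Y)$ and preserves adjacency, so it restricts to an automorphism of the induced neighbourhood $\langle N(v,Y)\rangle$, which is isomorphic to $X$. Because $X$ is asymmetric, this restriction must be the identity map, so $g$ fixes every vertex of $N(v,Y)$ pointwise. The crucial observation is that this fixing propagates: I would show that the fixed-point set $F = \{w \in V(Y) : g(w) = w\}$ is closed under passing to neighbours. Indeed, if $u \in F$, then repeating the argument with $u$ in place of $v$, and using that $Y$ is a uniform neighbourhood graph so that $\langle N(u,Y)\rangle \cong X$ is again asymmetric, we get $N(u,Y) \subseteq F$. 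Hence $F$ is a union of connected components of $Y$ containing $v$, and since $Y$ is connected, $F = V(Y)$; that is, $g$ is the identity. Thus $G_v = \{1\}$, and by Theorem~\ref{1.4} the group $G$ is regular.

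Finally, a graph whose automorphism group acts regularly on its vertices is a Cayley graph of that group (Sabidussi's theorem). Concretely, using condition (3) of Theorem~\ref{1.4} one identifies $V(Y)$ with $G$ via the unique automorphism carrying $v$ to each target vertex; adjacency is then left-invariant, and $Y = \mathrm{Cay}(G,S)$ for the connection set $S$ read off from the neighbours of $v$. This yields both conclusions.

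The main obstacle is the propagation step, which relies essentially on connectivity: a disjoint union of several copies of a connected vertex-transitive graph with asymmetric neighbourhood has neighbourhoods still isomorphic to $X$, yet its automorphism group may permute and act within the extra components, so the regularity conclusion fails. I would therefore make the connectivity hypothesis on $Y$ explicit. I would also take care that \emph{asymmetric} is used in the strong sense that the only automorphism of $X$ is the identity map, since this is exactly what forces the pointwise (rather than merely setwise) fixing of each neighbourhood at every stage of the propagation.
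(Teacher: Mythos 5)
Your argument is essentially identical to the paper's: the stabiliser of a vertex fixes its neighbourhood pointwise because the induced neighbourhood is asymmetric, this fixing propagates through the connected graph to force trivial stabilisers, regularity follows from Theorem~\ref{1.4}, and the Cayley property follows from Sabidussi's theorem. Your explicit remark that connectivity of $Y$ is needed is well taken --- the paper also invokes connectedness in its proof even though the lemma statement does not state it --- but the route is the same.
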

\begin{proof}
Let $X$ and $Y$ be as hypothesised. Let $u$ be an arbitrary vertex of $Y$. The stabiliser of $u$ also fixes every vertex in $N(u,Y)$ because $\langle N(u,Y) \rangle$ is asymmetric as it is isomorphic to $X$. This holds true for every vertex $u$ and as $Y$ is connected, we see that the stabiliser of $Y$ is the identity. Hence, by Theorem \ref{1.4} $Aut(Y)$ is a regular permutation group and every automorphism of $Y$ is semiregular. In particular, this means that non-identity automorphisms of $Y$ have no fixed points. By Sabidussi's classical theorem \cite{Sabd}, $Y$ is a Cayley graph.  
\end{proof}

For convenience we define an equivalence relation $\sim$ on the vertices of a graph X by letting $u\sim v$ if $\langle N(u,X) \rangle \cong \langle N(v,X) \rangle$. We denote the equivalence class containing a vertex $v$ by $[v]$. 
The following lemma generalises a result in \cite{t_realizations}.

\begin{lemma}\label{2.2}
Let $X$ denote a graph and let $[v]$ be of odd cardinality for some vertex $v\in X$. If $\langle N(v,X) \rangle $ does not have an automorphism that is the product of transpositions, then $X$ is not the induced neighbourhood of a finite Cayley graph.
\end{lemma}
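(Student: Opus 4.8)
The plan is to argue by contradiction. Suppose $X \cong \langle N(u,Y)\rangle$ for some vertex $u$ of a finite Cayley graph $Y$, and let $G \le \mathrm{Aut}(Y)$ be a regular subgroup, which exists because $Y$ is Cayley. I would identify the vertices of $X$ with the neighbours $N(u,Y)$ of $u$ in $Y$; under this identification the class $[v]$ becomes a subset $\mathcal{C} \subseteq N(u,Y)$, and for a neighbour $w$ the graph $\langle N(w,X)\rangle$ is exactly the subgraph of $Y$ induced on the common neighbourhood $N(u,Y)\cap N(w,Y)$. The aim is then to manufacture, out of the regular action of $G$, an involutory automorphism of $\langle N(v,X)\rangle$, contradicting the hypothesis that no such automorphism exists.

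First I would use regularity to define a pairing on $\mathcal{C}$. For each $w \in \mathcal{C}$ let $g_w$ be the unique element of $G$ with $g_w(u) = w$ (uniqueness is condition (3) of Theorem \ref{1.4}), and set $\phi(w) = g_w^{-1}(u)$. A short check shows $\phi(w)$ is again a neighbour of $u$ (apply $g_w^{-1}$ to the edge $[u,w]$), and that $g_w$ carries the set $N(u,Y)\cap N(\phi(w),Y)$ onto $N(u,Y)\cap N(w,Y)$, since $g_w$ sends $u \mapsto w$ and $\phi(w)\mapsto u$. As $g_w$ is a graph automorphism this gives $\langle N(\phi(w),X)\rangle \cong \langle N(w,X)\rangle$, so $\phi(w) \in [w] = [v]$ and $\phi$ maps $\mathcal{C}$ to itself. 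Because the unique element of $G$ sending $u$ to $\phi(w)$ is precisely $g_w^{-1}$, one has $g_{\phi(w)} = g_w^{-1}$ and hence $\phi(\phi(w)) = w$; that is, $\phi$ is an involution of the set $\mathcal{C}$.

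Now the parity hypothesis enters. Since $|[v]| = |\mathcal{C}|$ is odd and $\phi$ is an involution of $\mathcal{C}$, the number of fixed points of $\phi$ has the same parity as $|\mathcal{C}|$, so $\phi$ has at least one fixed point $w^{*}$. A fixed point means $g_{w^{*}}^{-1}(u) = g_{w^{*}}(u)$, i.e. $g_{w^{*}}^{2}(u) = u$, whence by regularity $t := g_{w^{*}}$ satisfies $t^2 = \mathrm{id}$; moreover $t \neq \mathrm{id}$ because $w^{*} \neq u$. Thus $t$ is an involution in $G$ interchanging $u$ and $w^{*}$, so it fixes $N(u,Y)\cap N(w^{*},Y) = N(w^{*},X)$ setwise, and restricting $t$ to this set yields an automorphism of $\langle N(w^{*},X)\rangle$. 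Since $t$ is a non-identity element of the regular group $G$ it is semiregular (condition (4) of Theorem \ref{1.4}), hence fixed-point-free on $V(Y)$, so its restriction is a product of disjoint transpositions. Finally $w^{*} \in [v]$ gives $\langle N(w^{*},X)\rangle \cong \langle N(v,X)\rangle$, so $\langle N(v,X)\rangle$ admits an automorphism that is a product of transpositions, contradicting the hypothesis; therefore no such $Y$ exists.

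I expect the main obstacle to be the construction and verification of the pairing $\phi$ — in particular, confirming that $g_w$ transports the common neighbourhood of $\{u,\phi(w)\}$ onto that of $\{u,w\}$ so that $\phi$ genuinely preserves the class $[v]$, and that $\phi$ is an involution. Once $\phi$ is available, the odd-cardinality hypothesis does the decisive work by forcing a fixed point, and semiregularity guarantees the resulting involution is fixed-point-free, hence a non-trivial product of transpositions, on any non-empty common neighbourhood. The one delicate point I would flag is the degenerate case where $N(v,X)$ is empty (that is, $v$ is isolated in $X$), which should be read against the intended meaning of ``product of transpositions'' for the null graph.
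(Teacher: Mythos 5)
Your proof is correct and takes essentially the same route as the paper: the paper pairs each automorphism in the regular subgroup sending $u$ into $[v]$ with its inverse, uses the odd cardinality of $[v]$ to extract a self-inverse (hence involutory, fixed-point-free) element, and restricts it to the common neighbourhood to obtain the forbidden product of transpositions — your pairing $\phi(w)=g_w^{-1}(u)$ on the vertex class is just this argument transported along the bijection between $[v]$ and that set of automorphisms. Your explicit verification that $\phi$ preserves $[v]$ and your remark about the degenerate empty-neighbourhood case are slightly more careful than the paper's write-up, but the idea is identical.
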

\begin{proof}
Let $X$ be as hypothesised and suppose $Y$ is a connected finite Cayley graph for which $X$ is the common induced neighbourhood. Let $u$ be some vertex of $Y$. 
Let $v$ denote a vertex in $N(u,Y)$ which is in the equivalence class $[v]$ as hypothesised. We will also refer to $\langle N(u,Y) \rangle$ as $X$ for the rest of the proof. 

By Sabidussi's theorem $Aut(Y)$ contains a regular subgroup which we will denote by $R$.
Consider the set $A$ containing all automorphisms in $R$ mapping $u$ to a vertex in $[v]$. By Theorem \ref{1.4} we have that each such automorphism is unique and hence there is a bijection between $A$ and $[v]$. This implies that $A$ has odd cardinality since $[v]$ also does. 

Let $\alpha$ be the automorphism mapping $u$ to $v$ in $N(u,Y)$.  Now we look at
$\alpha^{-1}$.  The edge $[u,v]$ must get mapped to an edge.  Because $\alpha^{-1}$
sends $v$ to $u$, it must send $u$ to a neighbour of $u$, that is, $\alpha^{-1}(u)$ belongs to
$N(u,Y)$. For each automorphism $\alpha$ that maps $u$ to any $w\in [v]$ we have that $\alpha^{-1}(u)$ must also be in $[v]$ since $\langle N(w,Y) \cap N(u,Y)\rangle \cong \langle N(u,Y) \cap N(\alpha^{-1}(u),Y) \rangle$. Hence $A$ is closed under inverses. Thus, elements of $A$ and their inverses are paired and since $A$ has odd cardinality, there is an element $\alpha^{'}$ such that $\alpha^{'} = (\alpha^{'})^{-1}$, that is, $\alpha^{'}$ is an involution. 

Let $v^{'}\in[v]$ be such that $\alpha^{'}(u) = v^{'}$. Using Theorem \ref{1.4} we have that $\alpha^{'}$ must be a product of transpositions. 
Since $\alpha^{'}$ is an involution we also have that it maps $N(u,Y) \cap N(v^{'},Y)$ to itself. Hence we may restrict $\alpha^{'}$ to the graph $\langle N(u,Y) \cap N(v^{'},Y) \rangle$. This implies that the graph $\langle N(u,Y) \cap N(v^{'},Y) \rangle$ has an automorphism that is the product of transpositions, which leads to a contradiction.
\end{proof}

\begin{theorem}\label{main_result_one}
Let $X$ denote an asymmetric graph and let $[v]$ be of odd cardinality for some vertex $v\in X$. If $\langle N(v,X) \rangle $ does not have an automorphism that is the product of transpositions, then $X$ is not the induced neighbourhood of a finite vertex-transitive graph.
\end{theorem}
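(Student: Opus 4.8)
The plan is to argue by contradiction and simply chain together the two preceding lemmas; the theorem is essentially their corollary. Suppose, for the sake of contradiction, that $X$ were the induced neighbourhood of some finite vertex-transitive graph $Y$. Since $X$ is asymmetric by hypothesis, Lemma \ref{2.1} applies directly and forces $Y$ to be a Cayley graph, with $Aut(Y)$ regular. This is the step that uses asymmetry in an essential way: without it a vertex-transitive graph need not be a Cayley graph, and this reduction would fail.

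Having reduced to the Cayley case, I would then observe that $X$ still satisfies every hypothesis of Lemma \ref{2.2}: the equivalence class $[v]$ has odd cardinality, and $\langle N(v,X)\rangle$ has no automorphism equal to a product of transpositions. Lemma \ref{2.2} then asserts that $X$ cannot be the induced neighbourhood of any finite Cayley graph, which directly contradicts the conclusion of the previous paragraph that $Y$ is exactly such a graph. This contradiction shows that $X$ is not the induced neighbourhood of a finite vertex-transitive graph, completing the argument.

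I do not anticipate any genuine obstacle in the proof of the theorem itself, since all of the real work already resides inside the two lemmas: Lemma \ref{2.1} leans on Sabidussi's theorem, while Lemma \ref{2.2} carries the substantive combinatorial argument (producing an involution in the regular subgroup via the odd-cardinality pairing of $A$ with its inverses, and then restricting that involution to the common neighbourhood $\langle N(u,Y)\cap N(v',Y)\rangle$). The one point deserving a moment's care is simply confirming that the hypotheses transfer verbatim and are mutually compatible: the asymmetry of $X$, needed for Lemma \ref{2.1}, is a condition on $X$ as a whole, whereas the odd class $[v]$ and the absence of a product-of-transpositions automorphism, needed for Lemma \ref{2.2}, are conditions on the induced neighbourhood subgraph $\langle N(v,X)\rangle$; these impose no conflict, so both lemmas may be invoked for the same hypothesised $Y$.
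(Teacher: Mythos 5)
Your proof is correct and follows exactly the same route as the paper: assume $X$ is the induced neighbourhood of a finite vertex-transitive graph $Y$, use Lemma \ref{2.1} (via asymmetry) to conclude $Y$ is a Cayley graph, and then invoke Lemma \ref{2.2} for the contradiction. Your additional remarks on why the hypotheses of the two lemmas are compatible are sound but not needed beyond what the paper records.
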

\begin{proof}
By Lemma \ref{2.1} we have that if $X$ is the induced neighbourhood of a finite vertex-transitive graph $Y$, then $Y$ is a Cayley graph. Then by Lemma \ref{2.2} we have a contradiction.
\end{proof}
\begin{corollary}\label{2.2.1}
Let $X$ be an asymmetric graph that contains a vertex with a unique induced neighbourhood that does not have an automorphism that is the product of transpositions. Then $X$ is not the induced neighbourhood of a finite vertex-transitive graph. The same holds if, in particular, $X$ is asymmetric and contains a unique odd vertex.

\end{corollary}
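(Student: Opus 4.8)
The plan is to derive both assertions directly from Theorem~\ref{main_result_one} by checking, in each case, that its two hypotheses are met: that the class $[v]$ has odd cardinality, and that $\langle N(v,X)\rangle$ admits no automorphism which is a product of transpositions. Since $X$ is assumed asymmetric throughout, only these two conditions remain to be verified, and the work of the corollary is entirely in recognising when they hold.

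For the first assertion I would read ``$v$ has a unique induced neighbourhood'' as saying that no vertex other than $v$ has induced neighbourhood isomorphic to $\langle N(v,X)\rangle$; equivalently $[v]=\{v\}$. Then $|[v]|=1$ is odd, so the first hypothesis of Theorem~\ref{main_result_one} holds, while the assumption that $\langle N(v,X)\rangle$ has no product-of-transpositions automorphism is precisely the second hypothesis. Applying the theorem gives the conclusion immediately.

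For the second assertion (the ``in particular'' clause) I would record the parity obstruction that makes the neighbourhood condition automatic. By Theorem~\ref{1.4} every automorphism arising in the regular group supplied by Sabidussi's theorem is semiregular, so the involutions produced in the proof of Lemma~\ref{2.2} are fixed-point-free; a product of transpositions in this sense partitions the whole vertex set of the neighbourhood into $2$-cycles, and hence can only act on an \emph{even} number of vertices. Consequently, if $v$ is \emph{odd}, meaning that its induced neighbourhood $\langle N(v,X)\rangle$ has an odd number of vertices, then $\langle N(v,X)\rangle$ cannot carry such an automorphism, and the second hypothesis holds for free. Reading \emph{unique} as before, so that $v$ is the only vertex with its induced neighbourhood and thus $[v]=\{v\}$ has odd cardinality $1$, both hypotheses are in place and Theorem~\ref{main_result_one} again applies.

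The step I expect to be the main obstacle is fixing the precise sense of ``a product of transpositions'' so that the parity argument is valid: the claim that an odd-order neighbourhood forbids such an automorphism is false for an arbitrary involution, which may fix a vertex, and becomes correct only once one insists --- as the semiregularity in Theorem~\ref{1.4} guarantees --- that the transpositions be disjoint and exhaust the vertex set. I would therefore make this fixed-point-free reading explicit before invoking the parity count; with that in hand, the corollary is a routine specialisation of Theorem~\ref{main_result_one}.
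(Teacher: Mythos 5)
Your proposal is correct and matches the argument the paper intends (the corollary is stated without proof, as a direct specialisation of Theorem~\ref{main_result_one} with $[v]=\{v\}$). Your explicit care in reading ``product of transpositions'' as a fixed-point-free involution --- forced by the semiregularity in Theorem~\ref{1.4} --- is exactly what makes the odd-valency case work, and is consistent with the paper's later remark that the corollary eliminates asymmetric graphs whose unique maximum-valency vertex has odd valency.
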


Corollary \ref{2.2.1} gives a large class of graphs that can be eliminated by Theorem \ref{main_result_one}. It is well known that almost all graphs are asymmetric and have a unique maximum valency vertex. Corollary \ref{2.2.1} eliminates all such graphs where the maximum valency is also odd. A class of graphs that intersect with this family are the asymmetric graphs with a complete valency vertex. Of course when such a graph has even order it is eliminated by Corollary \ref{2.2.1}. It should be noted that in general the set of asymmetric graphs with a complete valency vertex cannot be the induced neighbourhood of a uniform neighbourhood graph. To see this note that similar to the proof of Lemma \ref{2.1} a uniform neighbourhood graph with an asymmetric induced neighbourhood does not have a non-trivial automorphism that fixes any vertices. Let $u$ denote some vertex in the graph and $v$ the complete valency vertex in its neighbourhood. We can easily see that swapping $u$ and $v$ whilst fixing all other vertices is a non-trivial automorphism that fixes vertices and so we reach a contradiction. 

Another example of graphs that are eliminated by Theorem \ref{main_result_one} are asymmetric graphs of odd order such that the induced neighbourhood of each vertex is a graph that does not have an automorphism that is the product of transpositions. It is clear that a graph of odd order implies that there is at least one equivalence class of odd order and since the induced neighourhood of the vertices in this class do not have an automorphism that is the product of transpositions we have a contradiction. 

A final class of graphs eliminated by Theorem \ref{main_result_one} are asymmetric graphs that contain an odd number of vertices with a particular odd valency $k$. This is easy to see since we can partition all vertices with valency $k$ by equivalence classes and notice that at least one of these classes must have odd cardinality. 

Note that another way to prove corollary 2.3.1 is by considering the idea that by identifying distinct structures we are guiding the orbit of the automorphism. In this particular case the semiregular automorphism $\alpha$ maps $u$ to the vertex $v$ which has a distinct induced neighbourhood. And so $\alpha$ must map $v$ to the vertex in its neighbourhood that has the same distinct induced neighbourhood and so it maps it back to $u$. 
If we can show, by using this method of guiding the orbit, that the automorphism $\alpha$ has an orbit $[u,v_1,...,v_{k-1},u]$, where all $v_i$ are in the neighbourhood of $u$ in a vertex-transitive graph, then we have that $\alpha$ has order $k$. If the induced neighbourhood of $u$ is asymmetric, then we also have that both $\langle  \{u,v_1,...,v_{k-1} \} \rangle$ and $\langle \cap_{w\in v_1,...,v_{k-1}} N(w, X) \rangle$, where $X$ denotes the induced neighbourhood of $u$, each have a semiregular automorphism of order $k$. We now look at this idea for some examples. Note that we assume hereafter the semiregular condition of asymmetric graphs presented in Lemma \ref{2.1}. It's easy to see how the proceeding results can still hold for the induced neighbourhood of Cayley graphs after omitting the asymmetric condition.

\begin{definition}
\normalfont
Let $X$ denote a graph and $S$ a clique in $X$. We call $S$ an \textit{orbit-restrictor} if for all $v\in S$ we have, 
\begin{itemize}
\item $[v] \subseteq S$, 
\item for all $v_1,v_2\in[v]$, where $v_1,v_2$ are not necessarily distinct, and all isomorphisms $\phi$ mapping $\langle N(v_1,X) \rangle$ to $\langle N(v_2,X) \rangle$, we have $\phi(S-v_1) = S-v_2$. 
\end{itemize}

\end{definition}

\begin{lemma}\label{2.4}
Let $\alpha$ denote the automorphism mapping a vertex $u$ in a finite vertex-transitive graph $Y$ to some vertex $v$ in an orbit-restrictor $S$ in $\langle N(u,Y) \rangle$. Then the $\alpha$-orbit of $u$ only contains $u,v$ and a subset of the other vertices in $S$. 
Furthermore, $\alpha$ maps $\{u\} \cup S$ to itself. 
\end{lemma}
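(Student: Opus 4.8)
The plan is to reduce everything to a single invariance statement: I would show that the clique $C := \{u\}\cup S$ is setwise fixed by $\alpha$. Once $\alpha(C)=C$ is established, the orbit claim is immediate, because every power $\alpha^{k}(u)$ then lies in $C$, and the only member of $C$ that can equal $u$ is $u$ itself; hence the orbit consists of $u$, of $v=\alpha(u)$, and of further vertices that all lie in $C\setminus\{u\}=S$. So the real content is the "furthermore" sentence, and I would prove it first.

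Before the main step I would record two translations between $Y$ and $X:=\langle N(u,Y)\rangle$. Writing $w:=\alpha^{-1}(u)$, I note that $w$ is a vertex of $X$: since $v=\alpha(u)$ is adjacent to $u$ and $\alpha^{-1}$ preserves edges, $w$ is adjacent to $u$, so $w\in N(u,Y)$. Second, because $X$ is an induced subgraph, for any vertex $z\in N(u,Y)$ its neighbourhood inside $X$ is exactly $N(z,X)=N(u,Y)\cap N(z,Y)$. The crucial step is then to observe that $\alpha$ restricts to an isomorphism $\langle N(w,X)\rangle\to\langle N(v,X)\rangle$: indeed $\alpha$ sends $N(u,Y)$ to $N(v,Y)$ and sends $N(w,Y)$ to $N(u,Y)$ (as $\alpha(w)=u$), so it carries $N(u,Y)\cap N(w,Y)=N(w,X)$ onto $N(v,Y)\cap N(u,Y)=N(v,X)$. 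This exhibits $w\sim v$ inside $X$, so $w\in[v]$; and since $v\in S$, the first orbit-restrictor condition gives $[v]\subseteq S$, whence $w\in S$.

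Finally I would feed this isomorphism into the second orbit-restrictor condition, taking $v_1=w$, $v_2=v$ (both lie in the common class $[v]$) and $\phi=\alpha$. Because $S$ is a clique, $S-w\subseteq N(w,X)$, so the condition genuinely applies and yields $\alpha(S-w)=S-v$. Combining this with $\alpha(w)=u$ gives $\alpha(S)=\{u\}\cup(S-v)$, and adjoining $\alpha(u)=v$ produces $\alpha(\{u\}\cup S)=\{u\}\cup S$, which is precisely the invariance of $C$. The step I expect to be the main obstacle is the middle one: correctly passing between neighbourhoods computed in $Y$ and those computed in $X$, so that $\alpha$ is seen to intertwine $\langle N(w,X)\rangle$ and $\langle N(v,X)\rangle$, and then checking that the orbit-restrictor hypotheses are literally met (that the isomorphism is between $X$-neighbourhoods of two vertices of one equivalence class, and that $S-v_1$ sits inside $N(v_1,X)$, which is exactly where the clique hypothesis on $S$ is used). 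Everything after that is bookkeeping.
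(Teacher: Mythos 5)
Your proposal is correct and follows essentially the same route as the paper: you show $\alpha^{-1}(u)\in[v]\subseteq S$ by observing that $\alpha$ carries $N(u,Y)\cap N(\alpha^{-1}(u),Y)$ onto $N(u,Y)\cap N(v,Y)$, and then invoke the second orbit-restrictor condition to get $\alpha(S)=(S-v)\cup\{u\}$ and hence $\alpha(\{u\}\cup S)=\{u\}\cup S$. Your write-up is in fact tidier than the paper's: you make explicit where the clique hypothesis is used (so that $S-v_1\subseteq N(v_1,X)$ and the condition literally applies) and you deduce the orbit claim directly from the setwise invariance rather than by the paper's ``continuing in this manner'' iteration, but these are presentational improvements, not a different argument.
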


\begin{proof}
The automorphism $\alpha$ must map $v$ to a vertex that is in the same type of orbit-restrictor in its neighbourhood. 

Note that the induced neighbourhood of $u$ in the induced neighbourhood of $v$ in $Y$ is isomorphic to $\langle N(v,\langle N(u,Y)\rangle)$. That is, $u$ is a vertex in $\alpha([v])$. We also have that $\alpha$ is an isomorphism between  $\langle N(u,Y)\cap N(\alpha^{-1}(u),Y) \rangle$ and $\langle N(u,Y)\cap N(v,Y)\rangle$. 
So by the definition of an orbit-restrictor we have that $\alpha$ maps $S$ to $\{S-\{v\}\} \cup \{u\}$. So we have that $\alpha(v)$ is $u$ or one of the other vertices in $S$. Continuing in this manner we see that the $\alpha$-orbit of $u$ contains $u,v$ and a subset of the other vertices in $S$.

\end{proof}

We give a simple example of an orbit-restrictor. Given a graph $X$, any clique $S$ such that $\langle \cap_{w\in S}N(w,X) \rangle $ is unique over all cliques of order $|S|$ in $X$, is called an \textit{unique-neighbourhood clique}.
 
\begin{lemma}\label{equiv_orbit}
If $X$ denotes a graph and $S$ a unique-neighbourhood clique in $X$, then $S$ is an orbit-restrictor.
\end{lemma}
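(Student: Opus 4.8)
The plan is to reduce both defining properties of an orbit-restrictor to a single observation: that any isomorphism between the neighbourhood of a vertex of $S$ and the neighbourhood of another vertex must carry the rest of $S$ onto the rest of $S$, and that this is forced by the uniqueness of the common neighbourhood. Throughout I would write $C(S')=\bigcap_{w\in S'}N(w,X)$ for the common neighbourhood of a clique $S'$, and set $k=|S|$.

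First I would establish the key claim: if $v_1\in S$ and $\phi\colon\langle N(v_1,X)\rangle\to\langle N(v_2,X)\rangle$ is any isomorphism (for any vertex $v_2$ of $X$), then $v_2\in S$ and $\phi(S-v_1)=S-v_2$. The crucial computation is that, since $S$ is a clique, $S-v_1$ is a $(k-1)$-clique inside $N(v_1,X)$ whose common neighbourhood, taken within $\langle N(v_1,X)\rangle$, is precisely $C(S)$: a vertex is adjacent to $v_1$ and to every vertex of $S-v_1$ exactly when it is adjacent to all of $S$. Because $\phi$ is a graph isomorphism, it carries this common neighbourhood isomorphically onto the common neighbourhood of the $(k-1)$-clique $\phi(S-v_1)$ computed inside $\langle N(v_2,X)\rangle$; hence that latter induced subgraph is isomorphic to $\langle C(S)\rangle$.

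Now form $T=\{v_2\}\cup\phi(S-v_1)$. Since $\phi(S-v_1)\subseteq N(v_2,X)$ and $\phi(S-v_1)$ is a clique, $T$ is a clique of $X$; moreover $v_2\notin\phi(S-v_1)$ because $v_2$ is not adjacent to itself, so $|T|=k$. A vertex lies in $C(T)$ exactly when it is adjacent to $v_2$ and to all of $\phi(S-v_1)$, which is the common neighbourhood of $\phi(S-v_1)$ inside $\langle N(v_2,X)\rangle$; hence $\langle C(T)\rangle\cong\langle C(S)\rangle$. Since $S$ is a unique-neighbourhood clique, no $k$-clique other than $S$ has common neighbourhood isomorphic to $\langle C(S)\rangle$, so $T=S$. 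Removing $v_2$ then gives $\phi(S-v_1)=S-v_2$ and $v_2\in S$, proving the claim.

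Both conditions of the definition follow. For the containment $[v]\subseteq S$, take $v\in S$ and $w\sim v$; an isomorphism $\langle N(v,X)\rangle\to\langle N(w,X)\rangle$ exists by definition of $\sim$, and the claim with $v_1=v,\ v_2=w$ gives $w\in S$. For the second condition, any $v_1,v_2\in[v]$ already lie in $S$ by what we just proved, so the claim applies directly and yields $\phi(S-v_1)=S-v_2$ for every such isomorphism $\phi$ (including the case $v_1=v_2$, where it says any automorphism of $\langle N(v_1,X)\rangle$ fixes $S-v_1$ setwise). I expect the only point needing care to be the two bookkeeping identifications, namely that the within-neighbourhood common neighbourhood of $S-v_1$ is the global $C(S)$ and that $C(T)$ is the image of it under $\phi$; once these are verified, the uniqueness hypothesis closes the argument at once.
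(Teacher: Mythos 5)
Your proposal is correct and follows essentially the same route as the paper's proof: both transport the $(|S|-1)$-clique $S-v_1$ through $\phi$, adjoin $v_2$ to obtain an $|S|$-clique whose common neighbourhood induces a subgraph isomorphic to $\langle\cap_{w\in S}N(w,X)\rangle$, and invoke the uniqueness hypothesis to conclude that this clique must be $S$ itself. Your version merely packages the two conditions of the definition into a single claim and is more explicit about identifying the within-neighbourhood common neighbourhood of $S-v_1$ with the global one, which the paper leaves implicit.
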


\begin{proof}
The subset $S$ forms a clique and so it remains to show the other conditions of an orbit-restrictor. 

For every vertex $v\in S$, $\langle N(v,X) \rangle$ contains exactly one clique of order $|S|-1$ such that its intersection of neighbourhoods induces a subgraph in $\langle N(v,X) \rangle$ isomorphic to $\langle \cap_{u\in S}N(u,X) \rangle$. If there is more than one such clique we would contradict the uniqueness property of the unique-neighbourhood clique. This shows that the second condition of the definition of the orbit-restrictor must necessarily hold.

Suppose there exists a vertex $w$ not in $S$ such that $\langle N(w,X) \rangle \cong \langle N(v,X) \rangle$. Then there exists a clique of order $|S|$, denoted $S^{'}$, such that $w\in S^{'}$ and $\langle \cap_{u\in S^{'}}N(u,X) \rangle \cong \langle \cap_{u\in S}N(u,X) \rangle$. This again contradicts the uniqueness property of unique-neighbourhood cliques. Hence, $[v]\subseteq S$. 
\end{proof}

\begin{theorem}\label{2.5}
Suppose $X$ is asymmetric. Let $S$ denote a subset in $X$ that is a unique-neighbourhood clique and suppose for some $v\in S$, $S\cap [v]$ has odd cardinality.

If $\langle N(v,X) \rangle$ or $\langle \cap_{w\in S}N(w,X) \rangle $ do not have automorphisms that is the product of transpositions, then $X$ is not the induced neighbourhood of a finite vertex-transitive graph. 
\end{theorem}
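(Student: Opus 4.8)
The plan is to follow the template of Lemma \ref{2.2} to manufacture a single fixed-point-free involution, and then to exploit the orbit-restrictor machinery of Lemma \ref{2.4} so that this one involution can be restricted to \emph{two} different invariant subgraphs, each of which settles one half of the disjunction in the hypothesis. First I would invoke Lemma \ref{2.1}: if the asymmetric graph $X$ is the induced neighbourhood of a finite vertex-transitive $Y$, then $Y$ is a Cayley graph, so $Aut(Y)$ contains a regular subgroup $R$ and every automorphism of $Y$ is semiregular, hence fixed-point-free unless trivial. Fixing a vertex $u$ with $\langle N(u,Y)\rangle \cong X$, I identify $X$ with $\langle N(u,Y)\rangle$ and record two structural facts: by Lemma \ref{equiv_orbit} the unique-neighbourhood clique $S$ is an orbit-restrictor, so in particular $[v]\subseteq S$, whence $S\cap[v]=[v]$ has odd cardinality.

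Next I would reproduce the involution-construction of Lemma \ref{2.2}. Let $A$ be the set of automorphisms in $R$ that map $u$ into $[v]$; regularity (Theorem \ref{1.4}) gives a bijection $A \to [v]$, so $|A|$ is odd. The edge-preservation argument shows that $\alpha \in A$ forces $\alpha^{-1}(u)\in N(u,Y)$ and, via $\langle N(u,Y)\cap N(\alpha(u),Y)\rangle \cong \langle N(u,Y)\cap N(\alpha^{-1}(u),Y)\rangle$, that $\alpha^{-1}(u)\in[v]$; hence $A$ is closed under inverses. Since the identity is excluded (it fixes $u\notin N(u,Y)$) and $|A|$ is odd, pairing elements with their inverses yields an involution $\alpha' \in A$. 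Because $\alpha'$ is a non-trivial semiregular involution it is a fixed-point-free product of transpositions, and writing $v' := \alpha'(u)\in[v]$ it simply interchanges $u$ and $v'$.

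Finally I would restrict $\alpha'$ in two ways. Since $\alpha'$ swaps $u$ and $v'$ it preserves $N(u,Y)\cap N(v',Y)$, whose induced subgraph is $\langle N(v',X)\rangle \cong \langle N(v,X)\rangle$; being fixed-point-free, the restriction is a genuine product of transpositions, contradicting the first alternative of the hypothesis exactly as in Lemma \ref{2.2}. For the second alternative I would use that $v'\in S$ together with Lemma \ref{2.4} to conclude that $\alpha'$ maps $\{u\}\cup S$ to itself; consequently $\alpha'$ stabilises $\cap_{w\in\{u\}\cup S} N(w,Y) = \cap_{w\in S} N(w,X)$ and restricts there to a fixed-point-free product of transpositions, contradicting the second alternative. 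Whichever of $\langle N(v,X)\rangle$ or $\langle \cap_{w\in S}N(w,X)\rangle$ fails to admit such an automorphism, the corresponding restriction of $\alpha'$ is the desired contradiction. The step I expect to be the main obstacle is this second restriction: one must verify, through the orbit-restrictor property, that the \emph{whole} clique $S$ (and not merely $v'$) is permuted by $\alpha'$ so that the common-neighbourhood intersection is invariant, and one must rule out (or interpret away) the degenerate case in which $\cap_{w\in S}N(w,X)$ is empty, so that the produced automorphism is a nontrivial product of transpositions rather than the empty permutation.
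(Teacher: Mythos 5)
Your proposal is correct and follows essentially the same route as the paper: invoke Lemma \ref{2.1} and the involution construction of Lemma \ref{2.2} to obtain a fixed-point-free involution $\alpha'$ swapping $u$ with some $v'\in[v]\subseteq S$, then restrict it once to $\langle N(v',X)\rangle$ and once (via Lemma \ref{2.4}) to $\langle\cap_{w\in S}N(w,X)\rangle$. You actually supply details the paper leaves implicit, notably that Lemma \ref{equiv_orbit} gives $S\cap[v]=[v]$ so the odd-cardinality hypothesis feeds into the Lemma \ref{2.2} argument, and the caveat about the degenerate empty-intersection case.
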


\begin{proof}
Let $Y$ denote a finite vertex-transitive graph and suppose $u$ is some vertex in it. Furthermore suppose that $\langle N(u,Y) \rangle$ is isomorphic to $X$. Similar to the proof of Theorem \ref{main_result_one} and using Lemma \ref{equiv_orbit} we have that there exists a semiregular automorphism that is a product of transpositions that maps $u$ to a vertex $v^{'}$ in $[v]$ contained in the orbit-restrictor $S$ in $\langle N(u,Y) \rangle$. This implies that $\langle N(v^{'},X) \rangle$ must have an automorphism that is the product of transpositions. Furthermore, by Lemma \ref{2.4} we have that $\{u\} \cup S$ has an automorphism that is the product of transpositions and so $\langle \cap_{w\in S}N(w,X) \rangle $ must also. 

\end{proof}

\begin{corollary}\label{2.5.1}
Let $X$ be an asymmetric graph. Let $S$ denote an even ordered unique-neighbourhood clique in $X$. If there exists an odd order equivalence class in $S$, then $X$ is not the induced neighbourhood of a finite vertex-transitive graph. 
\end{corollary}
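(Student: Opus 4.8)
The plan is to argue by contradiction, reusing the involution-producing mechanism already developed in Lemma \ref{2.2} and Theorem \ref{2.5}, and then replacing the final ``product of transpositions'' step with a cleaner parity count on $\{u\}\cup S$. First I would suppose that $X$ is the induced neighbourhood of a finite vertex-transitive graph $Y$, fix a vertex $u$ with $\langle N(u,Y)\rangle \cong X$, and identify $S$ with the corresponding unique-neighbourhood clique sitting inside $N(u,Y)$. Since $X$ is asymmetric, Lemma \ref{2.1} gives that $Y$ is a Cayley graph and that $Aut(Y)$ is regular; write $R=Aut(Y)$. By Theorem \ref{1.4} every non-identity element of $R$ is semiregular and, in particular, fixed-point-free. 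By Lemma \ref{equiv_orbit} the clique $S$ is an orbit-restrictor, and the proof of that lemma also yields $[v]\subseteq S$ for the odd-order equivalence class $[v]$ guaranteed by hypothesis.

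Next I would reproduce the counting step from Lemma \ref{2.2}. Let $A$ be the set of automorphisms in $R$ carrying $u$ into $[v]$. Theorem \ref{1.4} provides a bijection between $A$ and $[v]$, so $|A|$ is odd; and the common-neighbourhood isomorphism argument of Lemma \ref{2.2} shows that $A$ is closed under taking inverses. Pairing each element of $A$ with its inverse and using that $|A|$ is odd forces the existence of a self-inverse $\alpha'\in A$, that is, a non-identity involution, with $\alpha'(u)=v'$ for some $v'\in [v]\subseteq S$.

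The decisive step is then a parity observation. Because $\alpha'$ maps $u$ into the orbit-restrictor $S$, Lemma \ref{2.4} applies and $\alpha'$ maps the set $\{u\}\cup S$ to itself. Since $S\subseteq N(u,Y)$ and $u\notin N(u,Y)$, we have $u\notin S$, so $\{u\}\cup S$ has cardinality $|S|+1$, which is odd because $|S|$ is even. But $\alpha'$ is a non-identity semiregular automorphism of $Y$, hence a fixed-point-free involution, so each of its orbits on the invariant set $\{u\}\cup S$ has length exactly $2$; this forces $|\{u\}\cup S|$ to be even, contradicting its odd cardinality. This contradiction establishes the corollary.

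I expect the main obstacle to be bookkeeping rather than a genuine difficulty: one must be careful that the equivalence class $[v]$, defined via induced neighbourhoods inside $X$, translates correctly into the set $A\subseteq R$ of odd size (so that the bijection and closure-under-inverses of Lemma \ref{2.2} genuinely apply), and that the hypotheses of Lemma \ref{2.4} are met for the specific involution $\alpha'$ rather than for a generic automorphism. Once the invariant set $\{u\}\cup S$ is correctly identified as having the odd cardinality $|S|+1$, the fixed-point-free nature of semiregular involutions closes the argument at once, with no appeal to the ``product of transpositions'' hypothesis of Theorem \ref{2.5} being necessary.
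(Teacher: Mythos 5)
Your proposal is correct and follows essentially the same route as the paper: both extract a fixed-point-free involution $\alpha'$ from the odd-cardinality and closure-under-inverses count of Lemma \ref{2.2} (using Lemma \ref{equiv_orbit} to place $[v]$ inside $S$), invoke Lemma \ref{2.4} to obtain an invariant set around $S$, and conclude with a parity contradiction. The only cosmetic difference is that you count the odd set $\{u\}\cup S$ of size $|S|+1$ directly, whereas the paper first notes that $\alpha'$ swaps $u$ and $v'$ and then counts $S\setminus\{v'\}$ of size $|S|-1$; both reduce to the same observation that a fixed-point-free involution cannot preserve a set of odd cardinality.
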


\begin{proof}
By the proof of the above Theorem we know that $\{u\} \cup S$ has an automorphism that is the product of transpositions. Since $\alpha$ swaps $u$ and $v^{'}$ we have that it maps $S-\{v^{'}\}$ to itself. But since this has odd order we have a contradiction.

\end{proof}

Consider the class of graphs $X$ that are asymmetric and that contain exactly one subset of vertices whose induced subgraph is an even-order clique. Suppose that at least one vertex in this set of vertices has a valency that is different from the others. Then such graphs are eliminated.

\begin{definition}
\normalfont
Let $X$ denote a graph and $v$ a vertex in $X$. We call a subset $Z$ a \textit{$[v]$-fixed subset} if, 
\begin{itemize}
\item $Z \subseteq \cap_{w\in[v]} N(w,X)$, 
\item for all $v_1,v_2\in[v]$, where $v_1,v_2$ are not necessarily distinct, and all isomorphisms $\phi$ mapping $\langle N(v_1,X) \rangle$ to $\langle N(v_2,X)\rangle$, we have $\phi(Z) = Z$. 
\end{itemize}
\end{definition}

\begin{lemma}\label{general_result}
Let $X$ denote an asymmetric graph, $v$ a vertex in $X$ and $Z$ a $[v]$-fixed subset. If for all $v_1, v_2 \in [v]$, where $v_1,v_2$ not necessarily distinct, and all isomorphisms $\phi$ mapping $\langle N(v_1,X) \rangle$ to $\langle N(v_2,X)\rangle$, we have that $\phi |_Z$ is not semiregular, then $X$ is not the induced neighbourhood of a finite vertex-transitive graph.

\end{lemma}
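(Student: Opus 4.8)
The plan is to mirror the strategy of Theorem \ref{main_result_one} and Lemma \ref{2.4}: assuming for contradiction that $X$ is the induced neighbourhood of a finite vertex-transitive graph $Y$, I would produce an isomorphism $\phi$ between the induced neighbourhoods of two vertices of $[v]$ whose restriction to $Z$ is forced to be semiregular, contradicting the hypothesis. Since $X$ is asymmetric, Lemma \ref{2.1} gives that $Aut(Y)$ is regular, so by Theorem \ref{1.4} every automorphism of $Y$ is semiregular. Fix a vertex $u$ of $Y$ and identify $\langle N(u,Y)\rangle$ with $X$; because $X$ is asymmetric this identification $\iota_u\colon X \to \langle N(u,Y)\rangle$ is the \emph{unique} such isomorphism, and the same holds at every vertex $x$, giving canonical isomorphisms $\iota_x$. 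I will write $Z_x = \iota_x(Z) \subseteq N(x,Y)$ for the canonical copy of $Z$ in the neighbourhood of $x$. First I would record the compatibility of these identifications with automorphisms: if $\beta \in Aut(Y)$ maps $x$ to $y$, then $\iota_y^{-1}\circ\beta\circ\iota_x$ is an automorphism of $X$, hence the identity, so $\beta\circ\iota_x = \iota_y$ and in particular $\beta(Z_x)=Z_{\beta(x)}$.

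Next, I would choose a vertex $\tilde v \in [v]$ and set $v' = \iota_u(\tilde v)$, a neighbour of $u$. By vertex-transitivity there is an automorphism $\alpha$ with $\alpha(u)=v'$, and $\alpha$ is semiregular. Regarding $u$ as a vertex of $\langle N(v',Y)\rangle$, put $\tilde u = \iota_{v'}^{-1}(u)$; since the common neighbourhood $\langle N(u,Y)\cap N(v',Y)\rangle$ is symmetric in $u$ and $v'$, one checks $\langle N(\tilde u,X)\rangle \cong \langle N(\tilde v,X)\rangle$, so $\tilde u \in [v]$ as well. The map $\phi := \iota_{v'}^{-1}\circ\iota_u$ then restricts to an isomorphism $\langle N(\tilde v,X)\rangle \to \langle N(\tilde u,X)\rangle$ between the neighbourhoods of two vertices of $[v]$, because $\iota_u$ carries $N(\tilde v,X)$ onto $N(u,Y)\cap N(v',Y)$ and $\iota_{v'}^{-1}$ carries this back onto $N(\tilde u,X)$.

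The main step, and the one I expect to be the crux, is to show that $Z_u = Z_{v'}$ as subsets of $V(Y)$, so that $Z_u$ is invariant under $\alpha$. This is exactly where the defining property of a $[v]$-fixed subset enters: applying it to the isomorphism $\phi$ above (with $v_1 = \tilde v$, $v_2 = \tilde u$, both in $[v]$) gives $\phi(Z) = Z$, i.e. $\iota_{v'}^{-1}(\iota_u(Z)) = Z$, which is precisely $\iota_u(Z) = \iota_{v'}(Z)$, that is $Z_u = Z_{v'}$. Writing $W := Z_u = Z_{v'}$, I then have $\alpha(W) = \alpha(Z_u) = Z_{\alpha(u)} = Z_{v'} = W$, so $\alpha$ preserves $W$ setwise.

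Finally I would transfer semiregularity. Since $\alpha$ is semiregular on $Y$ and $W$ is $\alpha$-invariant, $\alpha|_W$ is semiregular: every cycle of $\alpha|_W$ is a cycle of $\alpha$, hence of the common length. Using $\alpha\circ\iota_u = \iota_{v'}$, equivalently $\iota_{v'}^{-1} = \iota_u^{-1}\circ\alpha^{-1}$, one computes for $z \in Z$ that $\phi(z) = \iota_{v'}^{-1}(\iota_u(z)) = \iota_u^{-1}(\alpha^{-1}(\iota_u(z)))$, so that $\iota_u\circ(\phi|_Z)\circ\iota_u^{-1} = (\alpha|_W)^{-1}$. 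Hence $\phi|_Z$ is conjugate to $(\alpha|_W)^{-1}$ and is therefore semiregular, contradicting the hypothesis that $\phi|_Z$ is not semiregular for every isomorphism between induced neighbourhoods of vertices of $[v]$; the lemma follows. The delicate points are the repeated use of asymmetry to make the identifications $\iota_x$ canonical (so that $\alpha$ commutes with them and $\phi$ is well defined), and verifying that $\tilde u$ genuinely lies in $[v]$ so that the $[v]$-fixed property is applicable.
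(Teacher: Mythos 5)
Your proposal is correct and follows essentially the same route as the paper: both arguments use asymmetry to get a regular (hence everywhere-semiregular) $\mathrm{Aut}(Y)$, apply the $[v]$-fixed property to the isomorphism of common neighbourhoods induced by the automorphism $\alpha$ carrying $u$ into $[v]$ to conclude that $\alpha$ stabilises the copy of $Z$, and then restrict the semiregular $\alpha$ to $Z$ for the contradiction. Your version merely makes explicit the canonical identifications $\iota_x$ and the conjugacy between $\phi|_Z$ and $(\alpha|_W)^{-1}$, which the paper's terser proof leaves implicit.
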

\begin{proof}
Suppose $Y$ is a finite vertex-transitive graph and $u$ is a vertex with an induced neighbourhood $X$.
Let $\alpha$ be the semiregular automorphism that maps $u$ to the vertex $v$ in its neighbourhood. First note that $\alpha$ must map $N(u,Y) \cap N(\alpha^{-1}(u),Y)$ to $N(v,Y) \cap N(u,Y)$. So $\alpha$ induces an isomorphism from the induced subgraph $\langle N(\alpha^{-1}(u),X)\rangle$ to $\langle N(v,X) \rangle$. But since $\alpha^{-1}(u)\in[v]$ and all isomorphisms between the two graphs map $Z$ back to itself we may restrict the semiregular automorphism to $Z$. However this leads to a contradiction. 
\end{proof}

Given an orbit-restrictor $S$ and vertex $v\in S$ it is easy to verify that $\cap_{w\in S} N(w,X) \subseteq \cap_{w\in[v]}N(w,X)$ and for all $v_1,v_2\in[v]$ and all isomorphisms $\phi$ between $\langle N(v_1,X) \rangle$ and $\langle N(v_2,X)\rangle$ we have that $\phi$ maps $\cap_{w\in S} N(w,X)$ to itself. This shows that $[v]$-fixed subsets help to generalise a class of subsets that map back to themselves by some automorphism. The orbit-restrictor in particular however allows us to narrow down what the order of the semiregular automorphism is. 

\begin{theorem}\label{2.6}
Let $X$ denote the induced neighbourhood of some vertex $u$ in a finite vertex-transitive graph. Suppose that $X$ is asymmetric. Let $v$ be a vertex in $X$, and suppose that $v$ is an element of some orbit-restrictor $S$, and that $Z$ is a $[v]$-fixed subset. 

Then, there exists $v^{'}\in[v]$ and isomorphism $\phi: N(v^{'},X) \rightarrow N(v,X)$, such that $\phi|_Z$ is a semiregular automorphism of some order $d$, where $d>1$ and is a divisor of $|S|+1$. In particular, if $|S|+1$ is prime, then $\langle Z \rangle$ has a semiregular automorphism of order $|S|+1$. 
 
\end{theorem}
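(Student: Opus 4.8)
The plan is to assemble three ingredients already available in the excerpt: the regularity forced by asymmetry (Lemma \ref{2.1}), the confinement of the orbit of $u$ to $\{u\}\cup S$ (Lemma \ref{2.4}), and the defining property of a $[v]$-fixed subset. Throughout write $X=\langle N(u,Y)\rangle$, so that for a neighbour $w$ of $u$ we have $N(w,X)=N(w,Y)\cap N(u,Y)$. First I would invoke Lemma \ref{2.1}: since $X$ is asymmetric and $Y$ is a finite vertex-transitive graph, $Aut(Y)$ contains a regular subgroup $R$, and by Theorem \ref{1.4} every element of $R$ is semiregular, so all of its cycles share one common length. Regularity gives a unique $\alpha\in R$ with $\alpha(u)=v$; let $d$ denote the order of $\alpha$, equal to its common cycle length. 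As $v\in S\subseteq N(u,Y)$ and $Y$ has no loops we have $v\neq u$, so the $\alpha$-cycle through $u$ has length at least $2$, whence $d>1$.

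Next I would extract the divisibility. By Lemma \ref{2.4} the $\alpha$-orbit of $u$ lies in $\{u\}\cup S$ and $\alpha$ maps $\{u\}\cup S$ to itself. Set $T=\{u\}\cup S$; since $S\subseteq N(u,Y)$ and $u\notin N(u,Y)$, we have $u\notin S$ and $|T|=|S|+1$. Because $T$ is $\alpha$-invariant and $\alpha$ is semiregular, $T$ is a disjoint union of $\alpha$-cycles each of length $d$, so $d$ divides $|T|=|S|+1$.

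I would then build the isomorphism $\phi$ and restrict it to $Z$. Put $v'=\alpha^{-1}(u)$. Applying $\alpha^{-1}$ to the edge $[u,v]$ shows $v'$ is adjacent to $u$, so $v'\in N(u,Y)$; and since $\alpha(v')=u$ and $\alpha(u)=v$, the automorphism $\alpha$ carries $N(v',Y)\cap N(u,Y)$ onto $N(u,Y)\cap N(v,Y)$, i.e.\ it restricts to a graph isomorphism $\phi:=\alpha|_{N(v',X)}\colon \langle N(v',X)\rangle\to\langle N(v,X)\rangle$. This exhibits $\langle N(v',X)\rangle\cong\langle N(v,X)\rangle$, so $v'\in[v]$. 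As $Z$ is a $[v]$-fixed subset and $v',v\in[v]$, its defining property forces $\phi(Z)=Z$; hence $\alpha(Z)=Z$, and $\phi|_Z=\alpha|_Z$ is an automorphism of $\langle Z\rangle$. Being the restriction of the semiregular map $\alpha$ to an invariant set, $\phi|_Z$ is a product of $d$-cycles, hence (for $Z\neq\varnothing$) semiregular of order exactly $d$, with $d>1$ and $d\mid |S|+1$. Finally, if $|S|+1$ is prime its only divisors are $1$ and $|S|+1$, so $d>1$ forces $d=|S|+1$, giving the last claim.

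I expect the main obstacle to be the bookkeeping that pins the order of $\phi|_Z$ to exactly $d$ and simultaneously yields $d\mid|S|+1$: both rest on the uniform cycle length of a semiregular permutation together with the two invariance facts, namely that $\{u\}\cup S$ is $\alpha$-invariant (Lemma \ref{2.4}) and that $Z$ is $\alpha$-invariant (the $[v]$-fixed property). A secondary point requiring care is the verification that $v'=\alpha^{-1}(u)\in[v]$, so that the $[v]$-fixed property legitimately applies to $\phi$; this hinges on the observation that $\alpha$ restricts to an isomorphism between the relevant common-neighbourhood subgraphs of $v'$ and $v$ inside $X$.
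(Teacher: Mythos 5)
Your proposal is correct and takes essentially the same route as the paper: fix the regular-subgroup element $\alpha$ with $\alpha(u)=v$, deduce $d>1$ and $d\mid|S|+1$ from the $\alpha$-invariance of $\{u\}\cup S$ given by Lemma \ref{2.4} together with semiregularity, and then restrict to $Z$ via the $[v]$-fixed property. The only difference is presentational: where the paper cites Lemma \ref{general_result} for the restriction step, you inline its argument (taking $v'=\alpha^{-1}(u)$ and checking $v'\in[v]$), and you add the harmless caveat that $Z$ should be nonempty for the order claim.
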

\begin{proof}
Let $\alpha$ denote a semiregular automorphism that maps $u$ to $v$. Since by Lemma \ref{2.4} we have that $\alpha$ maps $\{u\}\cup S$ to itself we have that $\alpha$ must have an order greater than one that divides $|S|+1$ since it is semiregular. 
By Lemma \ref{general_result} we may restrict this semiregular automorphism to $Z$. 

\end{proof}

\begin{corollary}
Let $X$ denote an asymmetric graph and suppose that there exists a clique $S$ of order $p-1$, where $p$ is prime. If $S$ has a unique number of common neighbours amongst all cliques of the same order, and the number of common neighbours is not divisible by $p$, then $X$ is not the induced neighbourhood of a finite vertex-transitive graph. 
\end{corollary}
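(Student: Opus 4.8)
The plan is to assume, for contradiction, that $X$ is the induced neighbourhood of some vertex $u$ in a finite vertex-transitive graph $Y$, and then to apply Theorem \ref{2.6} to the clique $S$ so as to force a semiregular automorphism whose order is incompatible with the common-neighbour count. The first task is to bridge the corollary's hypothesis to the machinery of orbit-restrictors. I would argue that if the number of common neighbours of $S$ differs from that of every other clique of order $p-1$, then $S$ is in fact a unique-neighbourhood clique: any other clique $S'$ of the same order with $\langle \cap_{w\in S'} N(w,X)\rangle \cong \langle \cap_{w\in S} N(w,X)\rangle$ would have the same number of common neighbours, contradicting uniqueness of the count. Hence $\langle \cap_{w\in S} N(w,X)\rangle$ is unique among all cliques of order $|S| = p-1$, so $S$ is a unique-neighbourhood clique and, by Lemma \ref{equiv_orbit}, an orbit-restrictor.

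Next I would take $Z = \cap_{w\in S} N(w,X)$, the set of common neighbours of $S$, and invoke the remark following the definition of a $[v]$-fixed subset: for any $v\in S$ we have $Z \subseteq \cap_{w\in [v]} N(w,X)$, and every isomorphism $\phi$ between induced neighbourhoods of members of $[v]$ maps $Z$ to itself, so $Z$ is a $[v]$-fixed subset. With $S$ an orbit-restrictor and $Z$ a $[v]$-fixed subset, the hypotheses of Theorem \ref{2.6} are satisfied. Since $|S| + 1 = p$ is prime, Theorem \ref{2.6} yields a semiregular automorphism of $\langle Z\rangle$ of order exactly $p$ (the only divisor of $p$ exceeding $1$).

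Finally I would convert the order of this automorphism into a divisibility statement. A semiregular permutation has all cycles of equal length, so a semiregular permutation of order $p$ has every cycle of length $p$; consequently $p$ divides $|Z|$, which is precisely the number of common neighbours of $S$. I would also note that the hypothesis that this count is not divisible by $p$ forces $|Z| \ge 1$, so $Z$ is genuinely nonempty and acted on nontrivially. But $p \mid |Z|$ contradicts the assumption that the number of common neighbours is not divisible by $p$, completing the argument.

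The step I expect to be the main obstacle — or at least the one most worth spelling out carefully — is the first: justifying that a unique \emph{number} of common neighbours upgrades to a unique \emph{isomorphism type} of common-neighbourhood, so that $S$ qualifies as a unique-neighbourhood clique and Lemma \ref{equiv_orbit} applies. The remaining steps are essentially bookkeeping: identifying the correct $[v]$-fixed subset $Z$ as the common neighbourhood of $S$, and reading off the divisibility consequence of a prime-order semiregular permutation.
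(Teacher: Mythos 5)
Your proposal is correct and follows exactly the route the paper intends: the paper's own proof is the single line ``Apply Theorem \ref{2.6} to the unique-neighbourhood clique,'' and you have simply filled in the implicit steps (a unique count of common neighbours forces a unique isomorphism type, hence a unique-neighbourhood clique, hence an orbit-restrictor via Lemma \ref{equiv_orbit}; then $Z=\cap_{w\in S}N(w,X)$ is a $[v]$-fixed subset, and a semiregular automorphism of prime order $p$ on $Z$ forces $p$ to divide $|Z|$). No gaps; your expanded version is a faithful and more readable account of the same argument.
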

\begin{proof}
Apply Theorem \ref{2.6} to the unique-neighbourhood clique. 
\end{proof}

Given an asymmetric graph $X$, that is the induced neighbourhood of a vertex in a finite vertex-transitive graph, and vertex $v$ in $X$, consider the largest $[v]$-fixed subset formed by taking the union of all possible $[v]$-fixed subsets, and denote it $F(X,v)$. It is useful to note that if $v$ is a vertex of some orbit-restrictor $S\subseteq V(X)$ and that $|S|+1$ is prime, then $F(X,v) = \cap_{w\in S} N(w,X)$. 

\begin{prop}
Let $X$ denote an asymmetric graph that is the induced neighbourhood of a  vertex in a finite vertex-transitive graph, and suppose that $S$ is an orbit-restrictor in $X$ of order $p-1$ where $p$ is prime. Then, for any vertex $v\in S$ we have that $F(X,v) = \cap_{w\in S} N(w,X)$.
\end{prop}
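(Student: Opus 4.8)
The plan is to establish the two inclusions separately, and to observe that only the reverse inclusion genuinely consumes the primality of $p$. Throughout I would write $C=\cap_{w\in S}N(w,X)$ and recall that $F(X,v)$, being the union of all $[v]$-fixed subsets, is itself the largest $[v]$-fixed subset.

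For the inclusion $C\subseteq F(X,v)$ I would simply appeal to the remark following Lemma \ref{general_result}: there it is observed that for an orbit-restrictor $S$ and $v\in S$ the set $C$ lies in $\cap_{w\in[v]}N(w,X)$ and is mapped to itself by every isomorphism between $\langle N(v_1,X)\rangle$ and $\langle N(v_2,X)\rangle$ with $v_1,v_2\in[v]$. These are precisely the two conditions defining a $[v]$-fixed subset, so $C$ is a $[v]$-fixed subset, and hence $C\subseteq F(X,v)$ by maximality. This half uses nothing about $p$.

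For the reverse inclusion I would realise $X$ as $\langle N(u,Y)\rangle$ for a vertex $u$ of a finite vertex-transitive graph $Y$, and (by Lemma \ref{2.1} and Lemma \ref{2.4}) fix a semiregular automorphism $\alpha$ of $Y$ with $\alpha(u)=v$ that sends $\{u\}\cup S$ to itself. The crucial use of primality enters here: the set $\{u\}\cup S$ is $\alpha$-invariant of cardinality $|S|+1=p$, and since $\alpha$ is semiregular its cycles on this invariant set share a common length $\ell$ with $\ell\mid p$ and $\ell>1$ (as $u$ and $v=\alpha(u)$ lie in one cycle); primality forces $\ell=p$, so $\{u\}\cup S$ is a single $\alpha$-orbit. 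I would then label it $o_0=u$, $o_i=\alpha^i(u)$, so that $S=\{o_1,\dots,o_{p-1}\}$ and $\alpha(o_i)=o_{i+1}$ with indices taken modulo $p$.

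Finally I would exploit two properties of $F(X,v)$: it is $\alpha$-invariant (being $[v]$-fixed, it is preserved by the isomorphism $\langle N(\alpha^{-1}(u),X)\rangle\to\langle N(v,X)\rangle$ induced by $\alpha$, exactly as in the proofs of Lemma \ref{general_result} and Theorem \ref{2.6}, using that $\alpha^{-1}(u)\in[v]$), and it is contained in $V(X)=N(u,Y)$, so each of its vertices is adjacent to $u$. Suppose some $z\in F(X,v)$ were not adjacent to some $o_j\in S$ with $1\le j\le p-1$. Applying the automorphism $\alpha^{-j}$, which carries $o_j$ to $o_0=u$ and keeps $\alpha^{-j}(z)$ inside $F(X,v)$, would yield $\alpha^{-j}(z)\not\sim u$, contradicting $\alpha^{-j}(z)\in N(u,Y)$. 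Hence every $z\in F(X,v)$ is adjacent to all of $S$, i.e. $F(X,v)\subseteq C$, and the two inclusions give the claimed equality. The main obstacle, and the only place the hypothesis is really spent, is justifying that $\{u\}\cup S$ is a single $p$-cycle; once the orbit is a full cycle, the adjacency-to-$u$ contradiction is routine.
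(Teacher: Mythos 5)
Your proposal is correct and follows essentially the same route as the paper: the reverse inclusion by observing that $\cap_{w\in S}N(w,X)$ is itself a $[v]$-fixed subset, and the forward inclusion by using the semiregular automorphism $\alpha$ with $\alpha(u)=v$, the fact that $\{u\}\cup S$ is a single $\alpha$-orbit, and the $\alpha$-invariance of $F(X,v)$ to transport adjacency to $u$ around the orbit. You are merely more explicit than the paper about why primality forces $\{u\}\cup S$ to be one full $p$-cycle, which is a welcome clarification rather than a deviation.
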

\begin{proof}
Suppose $Y$ is a vertex-transitive graph, $u$ some vertex in it and $X$ its induced neighbourhood. By Theorem \ref{2.6} we have a semiregular automorphism $\alpha(u)=v$ such that $\alpha(F(X,v))=F(X,v)$. Since the $\alpha$ orbit of $u$ is $\{u\} \cup S$ and $u$ is adjacent to all vertices in $F(X,v)$ we have that $F(X,v)$ is adjacent to all vertices in $\{u\} \cup S$, and hence $F(X,v) \subseteq \cap_{w\in S} N(w,X)$. 

The converse is easy to show by noting that $\cap_{w\in S} N(w,X)$ is a $[v]$-fixed subset. 
\end{proof}

\begin{figure}[H]
\centering

\begin{tikzpicture}[scale = 0.4]

 \node[shape=circle,draw=black, fill=black] (A) at (-20,0) {};
  \node[shape=circle,draw=black, fill=black] (B) at (-20,-3) {};
   \node[shape=circle,draw=black, fill=black] (C) at (-20,-6) {};
    \node[shape=circle,draw=black, fill=black] (D) at (-23,0) {};
     \node[shape=circle,draw=black, fill=black] (E) at (-26,0) {};
      \node[shape=circle,draw=black, fill=black] (F) at (-26,-3) {};
       \node[shape=circle,draw=black, fill=black] (G) at (-23,-3) {};
        \node[shape=circle,draw=black, fill=black] (H) at (-17,-1.5) {};
         \node[shape=circle,draw=black, fill=black] (I) at (-17,-4.5) {};
          \node[shape=circle,draw=black, fill=black] (J) at (-14,-3) {};

           \node[shape=circle,draw=black, fill=black] (K) at (-9,3) {};
            \node[shape=circle,draw=black, fill=black] (L) at (-9,0) {};
             \node[shape=circle,draw=black, fill=black] (M) at (-9,-3) {};
              \node[shape=circle,draw=black, fill=black] (N) at (-12,0) {};
               \node[shape=circle,draw=black, fill=black] (O) at (-12,3) {};
                \node[shape=circle,draw=black, fill=black] (P) at (-15,3) {};
                 \node[shape=circle,draw=black, fill=black] (Q) at (-18,1.5) {};
                  \node[shape=circle,draw=black, fill=black] (R) at (-15,0) {};
                   \node[shape=circle,draw=black, fill=black] (S) at (-6,0) {};
                    \node[shape=circle,draw=black, fill=black] (T) at (-3,3) {};
                     \node[shape=circle,draw=black, fill=black] (U) at (-3,0) {};
                      \node[shape=circle,draw=black, fill=black] (V) at (-3,-3) {};
                       \node[shape=circle,draw=black, fill=black] (W) at (-3,-6) {};
                        \node[shape=circle,draw=black, fill=black] (X) at (-6,-3) {};
                         \node[shape=circle,draw=black, fill=black, label = {right: \large $u$}] (Y) at (3,3) {};
                          \node[shape=circle,draw=black, fill=black, label = {right:\large $v$}] (Z) at (3,-6) {};

            \path[draw =black, bend right = 60] (A) -- (B);
            \path[draw =black] (B) -- (C);
            \path[draw =black] (A) -- (D);
            \path[draw =black] (D) -- (E);
            \path[draw =black] (E) -- (F);
            \path[draw =black] (F) -- (G);
            \path[draw =black] (G) -- (B);
            
            \path[draw =black] (A) -- (H);
            \path[draw =black] (H) -- (J);
            \path[draw =black] (J) -- (I);
            \path[draw =black] (I) -- (B);
            
            \path[draw =black] (A) -- (M);
            \path[draw =black] (M) -- (L);
            \path[draw =black] (K) -- (L);
            \path[draw =black] (L) -- (O);
            \path[draw =black] (O) -- (P);
            \path[draw =black] (P) -- (Q);
            \path[draw =black] (Q) -- (R);
            \path[draw =black] (R) -- (N);
            \path[draw =black] (N) -- (M);
            
            \path[draw =black] (L) -- (S);
            \path[draw =black] (S) -- (T);
            \path[draw =black] (T) -- (U);
            \path[draw =black] (U) -- (V);
            \path[draw =black] (V) -- (W);
            \path[draw =black] (W) -- (X);
            \path[draw =black] (X) -- (M);
            
            \path[draw=black] (Y) -- (Z);
            \path[draw=black] (Z) .. controls (-6.5, -8) and (-10.5,-8) .. (C);
            
            \path[draw=black] (Y) .. controls (-1, 5) and (-5,5) .. (K);
              
    		\path[draw=black] (Y) -- (T);
		\path[draw=black] (Y) -- (U);
		\path[draw=black] (Y) -- (V);
		\path[draw=black] (Y) -- (W);
		
		\path[draw=black] (Z) -- (T);
		\path[draw=black] (Z) -- (U);
		\path[draw=black] (Z) -- (V);
		\path[draw=black] (Z) -- (W);
              
\end{tikzpicture}
\caption{Theorem \ref{2.6} example}
\end{figure}

We now look at an example of a graph that satisfies Theorem \ref{2.6} and not Theorem \ref{main_result_one}. Consider Figure 1 above. It is easy to check that the graph is asymmetric and that all equivalence classes which contain vertices with induced neighbourhoods that do not have an automorphism that is the product of transpositions are of even cardinality. Because of this Theorem \ref{main_result_one} doesn't hold. Consider all orbit-restrictors of order two. It is easy to verify that the only such orbit-restrictor is $\{u,v\}$. Furthermore, since the induced subgraph of the intersection of the neighbourhoods of these points is a path of length 3, we have by Theorem \ref{2.6} that this asymmetric graph is not the induced neighbourhood of a finite vertex-transitive graph. 

\section{Asymmetric induced neighbourhoods}
Note that the above Theorems help to eliminate graphs that are a type of asymmetric graph. So it is important to ask whether there exist vertex-transitive graphs with asymmetric neighbourhoods. In general there exist asymmetric graphs which are the induced neighourhoods of finite vertex-transitive graphs, specifically of finite Cayley graphs. As an example we consider the following graph. Let $G = \langle y,x | y^8 = e, x^2 = e, xyx = y^3 \rangle$ denote the semidihedral group. Let $S = \{y,y^{-1},yx, xy^{-1}, x, y^4 \}$ denote the connection set. Then the induced neighbourhood of the Cayley graph $\Gamma(G,S)$ is asymmetric and shown in Figure 2 below. 

\begin{figure}[H]
\centering

\begin{tikzpicture}[scale = 0.4]

 \node[shape=circle,draw=black, fill=black] (A) at (0,2) {};
  \node[shape=circle,draw=black, fill=black] (B) at (-2,0) {};
   \node[shape=circle,draw=black, fill=black] (C) at (2,0) {};
    \node[shape=circle,draw=black, fill=black] (D) at (4,-2) {};
    \node[shape=circle,draw=black, fill=black] (E) at (-4,-2) {};
      \node[shape=circle,draw=black, fill=black] (F) at (-6,-4) {};
 
            \path[draw =black] (A) -- (B);
            \path[draw =black] (B) -- (C);
            \path[draw =black] (C) -- (A);
            \path[draw =black] (C) -- (D);
            \path[draw =black] (B) -- (E);
            \path[draw =black] (E) -- (F);  
\end{tikzpicture}
\caption{Induced neighbourhood of $\Gamma(G,S)$}
\end{figure}
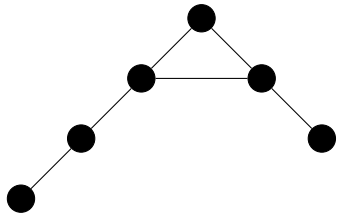

This shows that the set of asymmetric graphs that are the induced neighbourhoods of some finite vertex-transitive graph is non-empty. 

\section*{Acknowledgments}
The author would like to thank Brian Alspach for helpful discussions.

\small

\end{document}